\newtheorem{theorem}{Theorem}[section]
\newtheorem{proposition}[theorem]{Proposition}
\theoremstyle{definition}
\newtheorem{definition}[theorem]{Definition}
\theoremstyle{remark}
\newtheorem{remark}[theorem]{Remark}
\numberwithin{equation}{section}
\newcommand{\eps}{\varepsilon}
\newcommand{\R}{\mathbb{R}}
\newcommand{\N}{\mathbb{N}}
\newcommand{\Z}{\mathbb{Z}}
\renewcommand{\div}{{\rm div}\,}
\newcommand{\Id}{{\rm Id}\,}
\newcommand{\Int}{\displaystyle \int}
\def\d{\partial}
\def\dj{\Delta_j}
\def\tilde{\widetilde}
\def\div{{\rm div}\,}
\def\cA{{\mathcal A}}
\def\cB{{\mathcal B}}
\def\cC{{\mathcal C}}
\def\cF{{\mathcal F}}
\def\cP{{\mathcal P}}
\def\cO{{\mathcal O}}
\begin{document}
\title[lifespan for flows of incompressible fluids]
{Remarks on the lifespan of the solutions to some models of incompressible
fluid mechanics}

\author[R. Danchin]{Rapha\"el Danchin}
\address[R. Danchin]
{Universit\'e Paris-Est, LAMA, UMR 8050,
 61 avenue du G\'en\'eral de Gaulle,
94010 Cr\'eteil Cedex, France.}
\email{danchin@univ-paris12.fr}

\subjclass[2010]{35Q35,76B03}

%\commby{W. Craig}

%\date\today
\begin{abstract}
We give lower bounds for the lifespan of a solution to the inviscid Boussinesq system. 
In dimension two, we point out that it tends to infinity 
when the initial (relative) temperature tends to zero.
This is, to the best of our knowledge, the first result of this kind for the
inviscid Boussinesq system. In passing, we provide continuation criteria
(of independent interest) in the $N$-dimensional case. 
 In the second part of the paper,
our method is adapted to handle  the axisymmetric
incompressible Euler equations with swirl.
%, which have much in common with the inviscid Boussinesq system. 
  \end{abstract}

\maketitle

\section*{Introduction}

The evolution of the velocity $u=u(t,x)$  and pressure $P=P(t,x)$ fields of a perfect 
homogeneous  incompressible fluid is governed
by the following Euler equations:
\begin{equation}\label{eq:euler}
\left\{\begin{array}{l}
\d_tu+u\cdot\nabla u+\nabla P=0,\\[1ex]
\div u=0.
\end{array}\right.
\end{equation}

There is a huge literature concerning the well-posedness issue for Euler equations.
Roughly, they may be solved locally in time in 
any reasonable Banach space embedded in 
the set $C^{0,1}$ of bounded 
Lipschitz functions (see e.g. \cite{BCD,BM,Ch,hmidiK,HK,PP,V,Z}).

In the two-dimensional case, it is well known that Euler equations are  globally well-posed
for sufficiently smooth initial data. This noticeable fact relies on 
the conservation of the vorticity $\omega:=\d_1u^2-\d_2u^1$ along the flow 
of the velocity field, and has been first proved rigorously in the pioneering 
works by W. Wolibner \cite{W} and V. Yudovich \cite{Yu}.

This  conservation property is no longer true, however, in more 
physically relevant contexts such as
\begin{enumerate}
\item the three-dimensional setting for \eqref{eq:euler},
\item nonhomogeneous  incompressible perfect fluids,
\item inviscid  fluids  subjected to a buoyancy force
which is advected by the velocity fluid (the so-called inviscid Boussinesq system below).
\end{enumerate}

 As a consequence, the problem of global existence for general (even smooth
or small) data is still open for the above three cases. 
\medbreak
In a recent work \cite{DF}, it has been shown that for slightly nonhomogeneous
two-dimensional incompressible fluids, the lifespan tends to infinity
when the nonhomogeneity tends to zero. 
The present paper is mainly dedicated to the study of the lifespan 
for the first and third item. 

More precisely, in the first section of the paper, we shall consider the  
\emph{inviscid Boussinesq system}:
\begin{equation}\label{eq:boussinesq}
\left\{\begin{array}{l}
\d_t\theta+u\cdot\nabla \theta=0,\\[1ex]
\d_tu+u\cdot\nabla u+\nabla P=\theta e_N,\\[1ex]
\div u=0.
\end{array}\right.
\end{equation}
Here the relative temperature $\theta=\theta(t,x)$ is a real valued function\footnote{It need not
be nonnegative as it designates the discrepancy to some 
reference temperature.} and $e_N$ stands for the unit vertical vector.  
\smallbreak

 As for the standard incompressible Euler equations, 
 any functional space   embedded in  $C^{0,1}$ 
is a good candidate  for the study of the well-posedness issue 
for \eqref{eq:boussinesq}. 
This stems from  the fact that System \eqref{eq:boussinesq}  is a  coupling between
  transport equations. 
Hence preserving the initial regularity requires the velocity field to be at least locally Lipschitz with respect to the space variable.
By arguing as in  \cite{BCD}, Chap. 7, 
one may  show that, indeed,
 System \eqref{eq:boussinesq}
is locally well-posed in  $B^s_{p,q}$  whenever
$B^s_{p,q}$ is embedded in $C^{0,1}$ or, in other words, for any $(s,p,q)\in\R\times[1,+\infty]^2$
satisfying \begin{equation}\label{eq:conditionC}
s>1+\frac{N}{p}\qquad\mbox{or}\qquad s=1+\frac{N}{p}\;\;\mbox{and}\;\;q=1\,. 
\end{equation}

As a by-product of estimates for transport equations,  we shall get various continuation
 criteria which generalize those
of \cite{ES} and of \cite{LWZ}. 
We shall finally establish lower bounds for the lifespan of the solutions 
to \eqref{eq:boussinesq} which show that in the two-dimensional case
and for small initial temperature, the solution tends to be global-in-time. 
\medbreak
As pointed out in many works (see e.g. \cite{ES}), 
there is a formal similarity between the two-dimensional Boussinesq system
and  general axisymmetric solutions to the three-dimensional Euler system
-- the so-called axisymmetric solutions \emph{with swirl}. 
In the second part of this paper, we adapt the method of the first part so as to establish
 new lower bounds for the lifespan 
to those solutions in the case where the swirl is small. In particular, we find out  that the solution 
tends to be global if the swirl goes to zero. 
\smallbreak
In the Appendix, we briefly recall the definition and a few basic properties
of Besov space, and prove a commutator estimate. 
\medbreak
  Before going further into the description of our results, let us introduce a few notation.
 \begin{itemize}
\item Throughout the paper, $C$ stands for
a harmless ``constant'' the meaning of which depends on the context.
\item 
The vorticity $\omega$ associated to a vector field $u$ over $\R^N$ is the matrix valued
function with entries
$$
\omega_{ij}:=\d_j u^i-\d_iu^j.
$$
If $N=2$ then the vorticity is identified with the scalar function
$\omega:=\d_1u^2-\d_2u^1$ and if $N=3,$ with the vector field $\nabla\times u.$
\item For all  Banach space $X$ and interval $I$ of $\R,$  
we denote by $\cC(I;X)$ 
 the set of continuous   functions on $I$ with
values in $X.$
If $X$ has predual $X^*$ then 
we denote by $\cC_w(I;X)$ the set of bounded measurable functions 
$f:I\rightarrow X$ such that for any $\phi\in X^*,$ the 
function $t\mapsto\langle f(t),\phi\rangle_{X\times X^*}$
is continuous over~$I.$
\end{itemize}

%%%%%%%%%%%%%%%%%%%%%%%%%%%%%%%%%%%%%%%

\section{The inviscid Boussinesq system}

This section is devoted to the well-posedness issue for the inviscid Boussinesq system
\eqref{eq:boussinesq}. We  first establish a local-in-time existence 
result and continuation  criteria in the spirit of those for the incompressible Euler equation. 
Next, we provide a new lower bound for the lifespan. 
Roughly, we  establish that if $\theta_0$ is of order $\eps,$
then the lifespan is at least of order $\log |\log\eps|.$

\subsection{Well-posedness and continuation criteria}

The present subsection is devoted to the proof of the following result.

\begin{theorem} \label{th:boussinesq}
Let $(s,p,q)\in\R\times[1,+\infty]^2$ satisfy \eqref{eq:conditionC}.
Assume that $u_0$ (with $\div u_0\equiv0$) 
and $\theta_0$ belong to $B^s_{p,q}$ and that, in addition, $(u_0,\nabla\theta_0)\in L^r$
for some $r\in]1,\infty[$ if $p=\infty.$
Then \eqref{eq:boussinesq} admits a unique local-in-time
solution $(\theta,u,\nabla P)$ in $\cC_w(]-T,T[;B^s_{p,\infty})$ if $q=\infty$
and in  $\cC(]-T,T[;B^s_{p,q})$ if $q<\infty.$ 
Besides, $\nabla\theta$ and $u$ are in $\cC(]-T,T[;L^r)$ if $(u_0,\nabla\theta_0)\in L^r.$
\smallbreak

Furthermore, the solution may be continued beyond\footnote{For expository purpose, we just
consider  positive times} $T$ whenever one of the following three conditions is satisfied:
\begin{enumerate}
\item[i)] $\Int_0^T\|\nabla u\|_{L^\infty}\,dt<\infty$;
\item[ii)]  $\Int_0^T\bigl(\|\omega\|_{L^\infty}+\|\nabla\theta\|_{L^\infty}\bigr)\,dt<\infty$ and $s\!>\!1+N/p$;
\item[iii)] $N=2,$  $\Int_0^T\|\nabla\theta\|_{L^\infty}\,dt<\infty$ and $s\!>\!1+2/p.$ 
\end{enumerate}
\end{theorem}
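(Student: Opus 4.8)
The plan is to prove existence, uniqueness and continuation by a standard (but carefully organized) transport-equation approach, treating \eqref{eq:boussinesq} as a coupled system of transport equations for $\theta$ and for $u$ whose forcing terms are $\theta e_N$ and the gradient of the pressure. First I would set up the a priori estimates in $B^s_{p,q}$. Applying the standard transport estimate in Besov spaces to $\d_t\theta+u\cdot\nabla\theta=0$ gives, for $(s,p,q)$ satisfying \eqref{eq:conditionC},
\begin{equation}\label{eq:plan1}
\|\theta(t)\|_{B^s_{p,q}}\le\|\theta_0\|_{B^s_{p,q}}\exp\Bigl(C\Int_0^t\|\nabla u\|_{L^\infty\cap B^{s-1}_{p,q}}\,d\tau\Bigr),
\end{equation}
and using that $\nabla u$ is recovered from $\omega$ (or, equivalently, from $u$ itself) by an operator of order $0$ together with the embedding $B^s_{p,q}\hookrightarrow C^{0,1}$, this closes with a Grönwall argument once the same is done for $u$. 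For the velocity one applies the transport estimate to $\d_tu+u\cdot\nabla u=-\nabla P+\theta e_N$; the pressure is eliminated via $\Delta P=\div(\theta e_N-u\cdot\nabla u)$, so $\nabla P$ is estimated in $B^s_{p,q}$ by $\|\theta\|_{B^s_{p,q}}$ plus a term controlled by $\|u\|_{B^s_{p,q}}^2$ (here one uses the algebra/product laws in $B^s_{p,q}$ under \eqref{eq:conditionC}). Combining the two bounds yields a closed differential inequality for $X(t):=\|\theta(t)\|_{B^s_{p,q}}+\|u(t)\|_{B^s_{p,q}}$ of Riccati type, hence a positive lifespan $T$ and the stated regularity on $]-T,T[$.

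Next I would handle the extra $L^r$ integrability when $p=\infty$: since $u$ and $\nabla\theta$ both satisfy transport equations (differentiate the $\theta$-equation to get $\d_t\nabla\theta+u\cdot\nabla\nabla\theta=-\nabla u\cdot\nabla\theta$), the $L^r$ norms propagate by the standard $L^r$ transport estimate, $\|u(t)\|_{L^r}+\|\nabla\theta(t)\|_{L^r}\le C(\|u_0\|_{L^r}+\|\nabla\theta_0\|_{L^r})\exp(C\int_0^t\|\nabla u\|_{L^\infty}\,d\tau)$, noting that $\|\nabla P\|_{L^r}$ is controlled via the $L^r$-boundedness of Riesz transforms (valid precisely because $r\in]1,\infty[$). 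Uniqueness is obtained by the usual argument in a lower-regularity norm: given two solutions, estimate the differences $\delta\!\theta$, $\delta\!u$, $\delta\!\Pi$ in $B^{s-1}_{p,q}$ (or in $L^2$/$L^r$ when $p=\infty$), where the products now lose one derivative but everything stays controlled by $\|\nabla u\|_{L^\infty}$ and by $\|(\theta,u)\|_{B^s_{p,q}}$; a Grönwall inequality forces the differences to vanish. Rigorous existence follows by a Friedrichs-type approximation (or mollification of the data and equations) together with the uniform bounds above and a compactness/weak-continuity argument giving $\cC_w$ when $q=\infty$; strong continuity in time when $q<\infty$ comes from the continuity-in-time part of the transport estimates.

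For the continuation criteria, the key point is that all three conditions ensure that $\int_0^T\|\nabla u\|_{L^\infty}\,dt<\infty$, after which reopening the above estimates on $[T-\eta,T]$ and iterating gives the extension. Criterion i) is immediate. For ii) and iii) the mechanism is a logarithmic interpolation inequality: for $s>1+N/p$ one has
\begin{equation}\label{eq:plan2}
\|\nabla u\|_{L^\infty}\le C\Bigl(1+\|\omega\|_{L^\infty}\log\bigl(e+\|u\|_{B^s_{p,q}}\bigr)\Bigr),
\end{equation}
the classical Beale--Kato--Majda--type bound (with $\omega$ replacing $\nabla u$ under the divergence-free constraint). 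Feeding \eqref{eq:plan2} into the transport estimate for $\|(\theta,u)\|_{B^s_{p,q}}$, together with $\|\theta\|_{B^s_{p,q}}$ growing at most like $\exp(\int\|\nabla u\|_{L^\infty})$, produces a closed Grönwall inequality for $\log(e+\|(\theta,u)\|_{B^s_{p,q}})$ provided one already controls $\int_0^T(\|\omega\|_{L^\infty}+\|\nabla\theta\|_{L^\infty})\,dt$ — this gives ii). For iii), when $N=2$ the vorticity satisfies $\d_t\omega+u\cdot\nabla\omega=\d_1\theta$, so $\|\omega(t)\|_{L^\infty}\le\|\omega_0\|_{L^\infty}+\int_0^t\|\nabla\theta\|_{L^\infty}\,d\tau$; hence finiteness of $\int_0^T\|\nabla\theta\|_{L^\infty}\,dt$ alone controls $\int_0^T\|\omega\|_{L^\infty}\,dt$ and we are back to case ii). I expect the main technical obstacle to be the bookkeeping in case ii): one must simultaneously control $\|\nabla\theta\|_{B^{s-1}_{p,q}}$ (needed to close the transport estimate for $\theta$) using only the $L^\infty$ hypotheses, which forces one to run a coupled logarithmic Grönwall inequality in the two quantities $\|\theta\|_{B^s_{p,q}}$ and $\|u\|_{B^s_{p,q}}$ rather than in a single scalar — the commutator estimate recalled in the Appendix is exactly what is needed to keep the transport-term remainders under control at regularity $s=1+N/p$ as well.
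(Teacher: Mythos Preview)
Your plan is correct and close to the paper's; both reduce the continuation criteria to transport estimates plus a logarithmic interpolation inequality, and both handle iii) by noting that the 2D vorticity equation has no stretching so that $\|\omega(t)\|_{L^\infty}\le\|\omega_0\|_{L^\infty}+\int_0^t\|\d_1\theta\|_{L^\infty}\,d\tau$ reduces iii) to ii). The paper organises things slightly differently: it works with the vorticity in $B^{s-1}_{p,q}$ rather than with $u$ in $B^s_{p,q}$, which removes the pressure from the picture altogether and makes the tame commutator estimate \eqref{eq:combis} (producing the $\|\nabla\theta\|_{L^\infty}\|\omega\|_{B^{s-1}_{p,q}}$ cross-term in \eqref{eq:theta1}) arise naturally.

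One genuine correction is needed. Your inequality \eqref{eq:plan2} is not valid as written: the Biot--Savart operator is unbounded on $L^\infty$, so the logarithmic bound must read
\[
\|\nabla u\|_{L^\infty}\lesssim\|\omega\|_{L^p\cap L^\infty}\log\bigl(e+\|\omega\|_{B^{s-1}_{p,q}}\bigr)
\]
(this is \eqref{eq:interpo}), with an explicit low-frequency $L^p$ control on $\omega$ that the constant ``$1+$'' in your formula does not supply. The paper inserts exactly this missing step: before invoking the log inequality it shows, via the plain $L^p$ transport estimates \eqref{eq:vort3}--\eqref{eq:theta3} and Gronwall, that the hypothesis of ii) already forces $\omega,\nabla\theta\in L^\infty(0,T;L^p)$. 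Only then does it apply \eqref{eq:interpo} and close the coupled system for $\|\omega\|_{B^{s-1}_{p,q}}+\|\theta\|_{B^s_{p,q}}$ with Osgood's lemma. You will need the same intermediate $L^p$ bound. Finally, your last sentence is slightly misplaced: criteria ii) and iii) explicitly exclude the endpoint $s=1+N/p$, and the role of the appendix commutator estimates \eqref{eq:com}--\eqref{eq:combis} is not to handle that endpoint but to obtain the \emph{tame} form of the $B^{s-1}_{p,q}$/$B^s_{p,q}$ transport bounds (coefficients $\|\nabla u\|_{L^\infty}$ and $\|\nabla\theta\|_{L^\infty}$ rather than full Besov norms), which is precisely what lets the coupled logarithmic Gronwall close.
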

Before proving this result, a few comments are in order.
\begin{enumerate}
\item[1.] If it is assumed that
$\omega_0\in L^r$ instead of $u_0\in L^r$ then the vorticity 
of the constructed solution is continuous in time with values in $L^r.$
\item[2.]
In the two-dimensional case and in the H\"older spaces framework, the
above statement has been established in \cite{CKN}. 
The critical Besov case (that is $p=1+2/p$, $p\in]1,\infty[$) has been
investigated in \cite{LWZ}. 
\item[3.]
In \cite{ES}, a continuation criterion involving the $L^\infty$ norm of the vorticity only 
has been stated. However, as the first
inequality below (7) therein fails if $m\geq2,$ we do not know whether that criterion is correct. 
\item[4.] 
The first item  has been proved
recently in \cite{LWZ} in the two dimensional case.
\item[5.] Let us finally mention that one may replace $\|\omega\|_{L^\infty}$
with $\|\omega\|_{\dot B^0_{\infty,\infty}\cap L^r}$ in the second criterion.
 \end{enumerate}
 \smallbreak\noindent\textit{Proof of Theorem \ref{th:boussinesq}.}~
The proof of the local well-posedness in the Besov spaces 
framework is  a straightforward adaptation to that 
of the corresponding result for the Euler system in $B^s_{p,q},$ and is thus omitted. 
The reader may refer to \cite{BCD}, Chap. 7 for more details.
\bigbreak
So let us go for the proof of the continuation criteria. 
Let us first assume that $1<p<\infty.$ In this case, the  Marcinkiewicz theorem for Calderon-Zygmund operators  ensures that 
 \begin{equation}\label{eq:BS}
 \|\nabla u\|_{L^p}\leq C\|\omega\|_{L^p}.
 \end{equation}
 Therefore, decomposing $\omega$ into low and high frequencies as follows\footnote{The notation $\Delta_{-1}$
 is defined in the appendix}:
  $$\omega=\Delta_{-1}\omega+(\Id-\Delta_{-1})\omega,$$
 and taking advantage of the remark that follows Proposition \ref{p:properties} in the appendix, we  gather that
\begin{equation}\label{eq:BSbis}
\|\nabla u\|_{B^{s-1}_{p,q}}\leq C\|\omega\|_{B^{s-1}_{p,q}}.
\end{equation}
 
 Now, in dimension $N$ the vorticity equation reads
$$
\d_t\omega+u\cdot\nabla\omega+\cA(\nabla u,\omega)={}^T\!\nabla(\theta e_N)-\nabla(\theta e_N)
\quad\hbox{with }\ \cA(\nabla u,\omega):=\omega\cdot\nabla u+{}^T\!\nabla u\cdot\omega.
$$
Hence 
applying $\Delta_j$ to the vorticity equation yields
$$
\d_t\omega_j+u\cdot\nabla\omega_j=-\Delta_j\cA(\nabla u,\omega)+\Delta_j\bigl({}^T\!\nabla(\theta e_N)-\nabla(\theta e_N)\bigr)+[u,\Delta_j]\cdot\nabla\omega
$$
with $\omega_j:=\Delta_j\omega$ and $\theta_j:=\Delta_j\theta.$
Therefore, because $\div u=0,$
\begin{multline}\label{eq:above}
\|\omega_j(t)\|_{L^p}\leq\|\omega_j(0)\|_{L^p}+\int_0^t\|\nabla\theta_j\|_{L^p}\,d\tau\\
+\int_0^t\|\Delta_j\cA(\nabla u,\omega)\|_{L^p}\,d\tau+\int_0^t\|[u,\dj]\cdot\nabla\omega\|_{L^p}\,d\tau.
\end{multline}
{}Next, let us use  (see the appendix) that
\begin{equation}\label{eq:com}
\bigl\|2^{j(s-1)}\|[u,\Delta_j]\cdot\nabla\omega\|_{L^p}\bigr\|_{\ell^q}
\lesssim \|\nabla u\|_{L^\infty}\|\omega\|_{B^{s-1}_{p,q}}\quad\hbox{whenever }\ s>0.
\end{equation}
If $s>1+N/p$ then standard tame estimates (see e.g. \cite{BCD}, Chap. 2) imply that
$$
\begin{array}{lll} \|\cA(\nabla u,\omega)\|_{B^{s-1}_{p,q}}&\leq&
C\bigl(\|\omega\|_{L^\infty}\|\nabla u\|_{B^{s-1}_{p,q}}+\|\nabla u\|_{L^\infty}\|\omega\|_{B^{s-1}_{p,q}}\bigr),\\[1ex]
&\leq& C\|\nabla u\|_{L^\infty}\|\nabla u\|_{B^{s-1}_{p,q}}.
\end{array}
$$
The last inequality remains true in the limit case $s=1+N/p$ and $q=1,$
a consequence of the algebraic structure of $\cA(\nabla u,\omega)$
(see e.g. Inequality (52) in \cite{DF}).
\smallbreak

Hence, multiplying \eqref{eq:above} by $2^{j(s-1)},$ taking the $\ell^q$ norm with respect
to  $j$ and taking advantage of \eqref{eq:BSbis} yields
\begin{equation}\label{eq:vort1}
\|\omega(t)\|_{B^{s-1}_{p,q}}\leq \|\omega_0\|_{B^{s-1}_{p,q}}
+C\int_0^t\|\nabla\theta\|_{B^{s-1}_{p,q}}\,d\tau
+C\int_0^t\|\nabla u\|_{L^\infty}\|\omega\|_{B^{s-1}_{p,q}}\,d\tau.
\end{equation}

Next, in order to bound the $B^s_{p,q}$ norm of $\theta,$
we use the fact that
$$
\d_t\theta_j+u\cdot\nabla\theta_j=[u,\Delta_j]\cdot\nabla\theta,
$$
whence 
\begin{equation}\label{eq:above1}
\|\theta_j(t)\|_{L^p}\leq\|\theta_j(0)\|_{L^p}
+\int_0^t\|[u,\dj]\cdot\nabla\theta\|_{L^p}\,d\tau.
\end{equation}
Given that, according to (a slight modification of) Lemma 2.100 of \cite{BCD}, we have
\begin{equation}\label{eq:combis}
\bigl\|2^{js}\|[u,\Delta_j]\cdot\nabla\theta\|_{L^p}\bigr\|_{\ell^q}
\lesssim \|\nabla u\|_{L^\infty}\|\theta\|_{B^{s}_{p,q}}+\|\nabla\theta\|_{L^\infty}
\|\omega\|_{B^{s-1}_{p,q}},
\end{equation}
we eventually get
\begin{equation}\label{eq:theta1}
\|\theta(t)\|_{B^{s}_{p,q}}\leq \|\theta_0\|_{B^{s}_{p,q}}
+C\int_0^t\Bigl(\|\nabla u\|_{L^\infty}\|\theta\|_{B^{s}_{p,q}}
+\|\nabla\theta\|_{L^\infty}
\|\omega\|_{B^{s-1}_{p,q}}\Bigr)
\,d\tau.
\end{equation}
Finally, from the equation for $\theta,$ we easily get
\begin{equation}\label{eq:theta2}
\|\nabla\theta(t)\|_{L^\infty}\leq \|\nabla\theta_0\|_{L^\infty}+\int_0^t\|\nabla u\|_{L^\infty}
\|\nabla\theta\|_{L^\infty}\,d\tau.
\end{equation}
So if $\nabla u$ is in $L^1([0,T[;L^\infty)$ then 
$\nabla\theta$ is in $L^\infty([0,T[\times\R^N).$ 
Therefore, summing up Inequalities \eqref{eq:vort1} and \eqref{eq:theta1}
and using Gronwall's lemma, we easily deduce that
$\|\omega\|_{B^{s-1}_{p,r}}$ and $\|\theta\|_{B^s_{p,r}}$ are bounded 
on $[0,T[.$
To complete the proof of the boundedness of 
the solution in $L^\infty([0,T[;B^s_{p,r}),$ we still have to bound
$u$ in $L^\infty([0,T[;L^p).$ For that,  we use the fact that 
\begin{equation}\label{eq:u}
u(t)=u(0)-\int_0^t\cP(u\cdot\nabla u)\,d\tau
\end{equation}
where $\cP$ stands for the Leray projector over divergence-free vector-fields.
As it is continuous over $L^p$  (recall that $1<p<\infty$), we deduce that 
\begin{equation}\label{eq:uLp}
\|u(t)\|_{L^p}\leq \|u_0\|_{L^p}+C\int_0^t\|\nabla u\|_{L^\infty}\|u\|_{L^p}\,d\tau.
\end{equation}

Now, the standard continuation criterion for hyperbolic PDEs ensures that 
the solution $(\theta,u)$ may be continued beyond $T.$
 \medbreak
 Let us now treat the case where $s>1+N/p$ and 
 \begin{equation}\label{eq:blowup}
 \int_0^T\bigl(\|\omega\|_{L^\infty}+\|\nabla\theta\|_{L^\infty}\bigr)\,dt<\infty.
 \end{equation}
 
  We first bound  $\omega$ and $\nabla\theta$  in $L^\infty([0,T[;L^p)$
  by taking advantage of  \eqref{eq:BS} and of the vorticity and temperature equations.
  We get
 \begin{eqnarray}\label{eq:vort3}
 &&\|\omega(t)\|_{L^p}\leq\|\omega_0\|_{L^p}+\int_0^t\|\nabla\theta\|_{L^p}\,d\tau
 +C\int_0^t\|\omega\|_{L^p}\|\omega\|_{L^\infty}\,d\tau,\\ 
&&\label{eq:theta3}
 \|\nabla\theta(t)\|_{L^p}\leq\|\nabla\theta_0\|_{L^p}
 +C\int_0^t\|\nabla\theta\|_{L^\infty}\|\omega\|_{L^p}\,d\tau.
 \end{eqnarray}
 Hence,
 $$
 \|(\omega,\nabla\theta)(t)\|_{L^p}\leq
 \|(\omega_0,\nabla\theta_0)\|_{L^p}+ C\int_0^t(1+\|(\omega,\nabla\theta)\|_{L^\infty})
  \|(\omega,\nabla\theta)\|_{L^p}\,d\tau.
 $$
 So Gronwall's lemma provides us with a bound for
  $\omega$ and $\nabla\theta$  in $L^\infty([0,T[;L^p).$
 \smallbreak
 Next, we use the following classical logarithmic interpolation inequality
 (see e.g. \cite{BCD}):
 \begin{equation}\label{eq:interpo}
 \|\nabla u\|_{L^\infty}\lesssim\|\omega\|_{L^p\cap L^\infty}
 \log\bigl(e+\|\omega\|_{B^{s-1}_{p,q}}\bigr).
 \end{equation}
 Plugging this inequality in \eqref{eq:vort1} and \eqref{eq:theta1}, 
 and summing up, we get
 $$
 \displaylines{
\quad \|\omega(t)\|_{B^{s-1}_{p,q}}+\|\theta(t)\|_{B^s_{p,q}}
 \leq  \|\omega_0\|_{B^{s-1}_{p,q}}+\|\theta_0\|_{B^s_{p,q}}\hfill\cr\hfill
 +C\int_0^t\bigl(1+\|\nabla\theta\|_{L^\infty}+\|\omega\|_{L^p\cap L^\infty}\bigr)
 \bigl( \|\omega\|_{B^{s-1}_{p,q}}+\|\theta\|_{B^s_{p,q}}\bigr)
  \log\bigl(e+\|\omega\|_{B^{s-1}_{p,q}}\bigr)\,d\tau.\quad}
 $$
 So Osgood's lemma implies that $\|\omega\|_{B^{s-1}_{p,r}}$ and $\|\theta\|_{B^s_{p,r}}$ are bounded on $[0,T[.$ Bounding $\|u\|_{L^p}$ may be done by combining Inequalities \eqref{eq:uLp} and \eqref{eq:interpo}. 
  Hence the solution $(\theta,u)$ may be continued beyond $T.$
 
 \smallbreak
 Let us finally assume that $N=2$ and that
 $$
 \int_0^T\|\nabla\theta\|_{L^\infty}\,dt<\infty.
$$
Then Equation \eqref{eq:vorticityN=2} gives
$$
\|\omega(t)\|_{L^\infty}\leq\|\omega_0\|_{L^\infty}+\int_0^t\|\d_1\theta\|_{L^\infty}\,d\tau.
$$
Hence $\omega\in L^\infty([0,T[\times\R^2)$ 
and the previous continuation 
criterion implies that  the solution $(\theta,u)$ may be continued beyond $T.$ 
\bigbreak
Let us end the proof with a few comments concerning the cases $p=1,\infty.$
If $p=\infty$ and the solution also satisfies
$(\nabla\theta,\omega)\in L^\infty([0,T[;L^r)$ for some $r\in]1,\infty[,$
then arguing as for proving \eqref{eq:BSbis} yields 
$$
\|\nabla u\|_{B^{s-1}_{\infty,q}\cap L^r}\leq C\|\omega\|_{B^{s-1}_{\infty,q}\cap L^r}.
$$
{}From   the vorticity and temperature equations, we get
$$
\begin{array}{lll}
\|\omega(t)\|_{L^r}&\leq& \|\omega_0\|_{L^r}+2\Int_0^t\|\nabla u\|_{L^\infty}\|\omega\|_{L^r}\,d\tau
+2\int_0^t\|\nabla \theta\|_{L^r}\,d\tau,\\[1ex]
\|\nabla\theta(t)\|_{L^r}&\leq&\|\nabla\theta_0\|_{L^r}+\Int_0^t\|\nabla u\|_{L^\infty}\|\nabla\theta\|_{L^r}\,d\tau.\end{array}
$$
So one may conclude that \eqref{eq:vort1} and \eqref{eq:theta1} hold true
if replacing the norm in $B^{s-1}_{\infty,q}$ by the norm in $B^{s-1}_{\infty,q}\cap L^r.$
In order to bound $\|u\|_{B^s_{\infty,q}},$ one may write  that (using Bernstein's inequality
to get the second line),
$$
\begin{array}{lll}
\|u(t)\|_{B^s_{\infty,q}}&\lesssim&\|\Delta_{-1}u(t)\|_{L^\infty}+\|\omega(t)\|_{B^{s-1}_{\infty,q}},\\[1.5ex]
&\lesssim&\|\Delta_{-1}u_0\|_{L^\infty}+\|\Delta_{-1}(u(t)-u_0)\|_{L^r}+\|\omega(t)\|_{B^{s-1}_{\infty,q}}.
\end{array}
$$
Now, according to \eqref{eq:u}, we have
$$
\|u(t)-u_0\|_{L^r}\leq C\int_0^t\|u\|_{L^r}\|\nabla u\|_{L^\infty}\,d\tau.
$$
{}From this, it is easy to complete the proof. 
\smallbreak
Finally, if $p=1$ then embedding ensures that $\nabla\theta$ and $u$ are
 in $L^\infty([0,T[;L^r)$ for some finite~$r,$ so that one may conclude as in the case $p=\infty.$
 \qed

%%%%%%%%%%%%%%%%%%%%%%%%%%%%%%%%%%%%%%%

\subsection{Lower bounds for  the lifespan of the solutions to \eqref{eq:boussinesq}}

Let  $(\theta_0,u_0)$  satisfy the assumptions of Theorem \ref{th:boussinesq}. 
Then is is clear 
 that $(\theta^\eps,u^\eps,\nabla \Pi^\eps)$ satisfies \eqref{eq:boussinesq}
on $[T^-/\eps,T^+/\eps]$  with initial data
$$
\theta_0^\eps=\eps^2\theta_0\quad\hbox{and}\quad
u_0^\eps=\eps u_0
$$
if and only if the triplet $(\theta,u,\nabla\Pi)$ defined by
$$
\theta^\eps(t,x):=\eps^2\theta(\eps t,x),\quad
u^\eps(t,x):=\eps u(\eps t,x)\ \hbox{ and }\
\Pi^\eps(t,x):=\eps^2\Pi(\eps t,x)
$$
satisfies \eqref{eq:boussinesq} on $[T^-,T^+]$ with data $(\theta_0,u_0).$ 
\smallbreak
{}From this,  we gather  that
for initial temperature and velocity of size $\eps^2$ and $\eps,$ respectively, 
the lifespan is (at least) of order  $\eps^{-1}.$
\medbreak
The above result is, obviously, independent of the dimension. 
At the same time, in the case $\theta_0\equiv0$
(corresponding to the  incompressible Euler equation)
global existence holds true in dimension $2.$
In the case $\theta_0\not\equiv0,$ the question of
global existence has remained unsolved, even in the two-dimensional case. 
We here want to study whether, nevertheless,
dimension $2$ is somehow ``better''. 
To answer this question, we shall take advantage of 
the fact that the vorticity equation in dimension $2$
has no stretching term: it reduces to
\begin{equation}\label{eq:vorticityN=2}
\d_t\omega+u\cdot\nabla\omega=\d_1\theta.
\end{equation}
Hence, taking advantage of the special a priori estimates
for the transport equation  in Besov spaces \emph{with null regularity index} 
(as discovered by M. Vishik in \cite{V}
and by T. Hmidi and S. Keraani in \cite{HK}), one may write 
\begin{equation}\label{eq:besov0}
\|\omega(t)\|_{B^0_{\infty,1}}
\leq \biggl( \|\omega_0\|_{B^0_{\infty,1}}
+\int_0^t\|\d_1\theta\|_{B^0_{\infty,1}}\,d\tau\biggr)
\biggl(1+C\int_0^t\|\nabla u\|_{L^\infty}\,d\tau\biggr).
\end{equation}
This will be the key to our result below.
\begin{theorem}
Assume that $N=2.$
Let  $(\theta_0,u_0)$ be in $B^s_{p,q}$ with $(s,p,q)$ 
satisfying  \eqref{eq:conditionC}. If $p\in\{1,+\infty\},$  suppose in addition 
that $(\nabla\theta_0,\omega_0)\in L^r$ for some
$1<r<\infty.$ There exists a constant $C$ depending only on $r$
and such that (setting $p=r$ if $p\in(1,+\infty)$), the lifespan $T^*$ of \eqref{eq:boussinesq} satisfies 
$$
T^*\geq \frac{1}{C\|\omega_0\|_{B^0_{\infty,1}\cap L^r}}
\log\biggl(1+\frac12\log\biggl(1+\frac{C\|\omega_0\|_{B^0_{\infty,1}\cap L^r}^2}{\|\nabla\theta_0\|_{B^0_{\infty,1}\cap L^r}}\biggr)\biggr)\cdotp
$$
\end{theorem}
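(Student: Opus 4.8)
The plan is to run a bootstrap argument on the two coupled quantities $\Omega(t):=\|\omega(t)\|_{B^0_{\infty,1}\cap L^r}$ and $\Theta(t):=\|\nabla\theta(t)\|_{B^0_{\infty,1}\cap L^r}$, using the continuation criterion iii) of Theorem \ref{th:boussinesq} to conclude that the solution persists as long as $\int_0^t\|\nabla\theta\|_{L^\infty}\,d\tau$ stays finite. First I would recall that in dimension two the vorticity solves \eqref{eq:vorticityN=2}, so that along the flow the $B^0_{\infty,1}\cap L^r$ norm of $\omega$ obeys the Vishik--Hmidi--Keraani bound \eqref{eq:besov0} (in its $B^0_{\infty,1}\cap L^r$ version, which follows by combining the logarithmic transport estimate with the elementary $L^r$ estimate, since $\div u=0$), namely
$$
\Omega(t)\leq\Bigl(\Omega(0)+\int_0^t\|\partial_1\theta\|_{B^0_{\infty,1}\cap L^r}\,d\tau\Bigr)\Bigl(1+C\int_0^t\|\nabla u\|_{L^\infty}\,d\tau\Bigr).
$$
Since $\theta$ is merely transported, $\Theta(t)$ satisfies $\Theta(t)\leq\Theta(0)\exp\bigl(C\int_0^t\|\nabla u\|_{L^\infty}\,d\tau\bigr)$ by the same null-index transport estimate applied to each component of $\nabla\theta$ (whose equation is $\partial_t\nabla\theta+u\cdot\nabla(\nabla\theta)=-{}^T\nabla u\cdot\nabla\theta$). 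The link back to $\|\nabla u\|_{L^\infty}$ is the standard endpoint bound $\|\nabla u\|_{L^\infty}\lesssim\|\omega\|_{B^0_{\infty,1}\cap L^r}=\Omega$ in two dimensions, which closes the system: setting $V(t):=\int_0^t\Omega\,d\tau$, one gets $\Theta(t)\leq\Theta(0)e^{CV(t)}$ and hence $\Omega(t)\leq(\Omega(0)+Ct\,\Theta(0)e^{CV(t)})(1+CV(t))$.

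The core of the proof is then a quantitative ODE lemma: as long as $CV(t)\leq1$ (say), the factor $1+CV(t)$ is bounded by $2$ and $e^{CV(t)}$ by $e$, so on that time interval $V'(t)=\Omega(t)\leq 2\Omega(0)+Ct\,\Theta(0)$, whence $V(t)\leq 2\Omega(0)t+Ct^2\Theta(0)$. Imposing $CV(t)\leq1$ therefore holds on a time interval of length at least of order $\min\bigl(\Omega(0)^{-1},(\Theta(0))^{-1/2}\bigr)$ — but this only reproduces the $\eps^{-1}$-type scaling and does not see the logarithms. To gain the double logarithm one must instead not freeze the exponential: keep $\Omega(t)\leq C\bigl(\Omega(0)+t\,\Theta(0)e^{CV(t)}\bigr)(1+CV(t))$ and feed it into $V'=\Omega$. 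The trick is that for the relevant time scale $t\sim\Omega(0)^{-1}\log|\log|$, the term $t\Theta(0)$ is still extremely small (polynomial in $\Omega(0)^{-1}$ times $\Theta(0)$), so it is the growth $e^{CV}$ that governs everything; a Gronwall-type / Osgood-type comparison with the model ODE $w'=C(\Omega(0)+\Theta(0)e^{w})$, $w(0)=0$, is what produces the bound. Solving that model ODE explicitly (it is separable: $dw/(\Omega(0)+\Theta(0)e^{w})$ integrates to a logarithm of a logarithm) yields precisely that $w$ stays $\leq$ a fixed constant for $t$ up to
$$
\frac{1}{C\Omega(0)}\log\Bigl(1+\tfrac12\log\Bigl(1+\tfrac{C\Omega(0)^2}{\Theta(0)}\Bigr)\Bigr),
$$
and on that interval $\int_0^t\|\nabla\theta\|_{L^\infty}\,d\tau\leq\int_0^t\Theta\,d\tau<\infty$, so criterion iii) forbids blow-up up to that time.

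Concretely I would carry out the steps in this order: (1) establish the $B^0_{\infty,1}\cap L^r$ transport estimates for $\omega$ and for $\nabla\theta$ from \eqref{eq:vorticityN=2} and the equation for $\theta$, invoking the Vishik--Hmidi--Keraani estimate in the $B^0_{\infty,1}$ part and the straightforward $L^r$ energy estimate in the other; (2) insert the logarithmic-free endpoint estimate $\|\nabla u\|_{L^\infty}\lesssim\Omega$; (3) reduce to the scalar differential inequality for $V(t)=\int_0^t\Omega\,d\tau$; (4) compare with the explicitly solvable model ODE $w'=C(\Omega(0)+\Theta(0)e^w)$ and read off the time at which $w$ reaches, say, $1$; (5) conclude via continuation criterion iii). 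The main obstacle is step (4): one has to be careful that the $+Ct\Theta(0)$ term (not just $\Theta(0)e^{CV}$) is genuinely negligible on the claimed time interval, i.e. that $t\,\Theta(0)\lesssim\Omega(0)$ there, which requires checking that $\Omega(0)^{-1}\log|\log(\Omega(0)^2/\Theta(0))|\cdot\Theta(0)\ll\Omega(0)$ — true because $\Theta(0)/\Omega(0)^2$ is the small parameter and the logarithms grow arbitrarily slowly. One should also track the constant $C$ to make sure it depends only on $r$ (through the norm of the Leray projector / Calderón--Zygmund constant and the $B^0_{\infty,1}$ transport estimate constant), which is where the hypothesis "$C$ depending only on $r$" and the normalization $p=r$ enter. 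Everything else is routine Gronwall/Osgood bookkeeping.
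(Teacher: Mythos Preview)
Your overall architecture matches the paper's proof exactly: define $\Omega(t)=\|\omega(t)\|_{B^0_{\infty,1}\cap L^r}$ and $\Theta(t)=\|\nabla\theta(t)\|_{B^0_{\infty,1}\cap L^r}$, use the Vishik--Hmidi--Keraani estimate on the vorticity equation and the standard transport estimate on $\theta$, close with $\|\nabla u\|_{L^\infty}\lesssim\|\nabla u\|_{B^0_{\infty,1}}\lesssim\Omega$, and conclude via continuation criterion~iii). You also arrive at the correct master inequality
\[
\Omega(t)\le\Bigl(\Omega_0+t\,\Theta_0\,e^{CV(t)}\Bigr)\bigl(1+CV(t)\bigr),\qquad V(t)=\int_0^t\Omega\,d\tau.
\]

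The gap is in your step~(4). Your ``model ODE'' $w'=C(\Omega_0+\Theta_0 e^{w})$ is obtained by \emph{dropping} the factor $(1+CV)\ge1$ from the right-hand side; since that factor multiplies, discarding it gives a \emph{smaller} right-hand side, so the resulting ODE is \emph{not} an upper barrier for $V$. Moreover, the separable integral $\int dw/(\Omega_0+\Theta_0 e^{w})$ is a \emph{single} logarithm (it equals $\Omega_0^{-1}\log\bigl(e^{w}/(\Omega_0+\Theta_0 e^{w})\bigr)$ up to constants), not a double one; with that model you would read off a blow-up time $\sim\Omega_0^{-1}\log(\Omega_0/\Theta_0)$, which is stronger than the statement and, for the reason just given, unjustified. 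The double logarithm in the theorem is precisely the price of the factor $(1+CV)$, not an artifact.

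The paper handles step~(4) differently, by a bootstrap: assume provisionally that $T\Theta_0 e^{CV(T)}\le\Omega_0$; then the inequality above collapses to $\Omega(t)\le 2\Omega_0(1+CV(t))$, whence Gronwall gives $\Omega(t)\le 2\Omega_0 e^{2C\Omega_0 t}$ and $CV(t)\le e^{2C\Omega_0 t}-1$. Feeding this back, the bootstrap hypothesis is guaranteed as soon as
\[
X\exp(e^{X}-1)\le Y,\qquad X:=2C\Omega_0 T,\quad Y:=\frac{2C\Omega_0^2}{\Theta_0},
\]
and using $X\le e^{X}-1$ reduces this to $\exp\bigl(2(e^{X}-1)\bigr)\le 1+Y$, i.e.\ $X\le\log\bigl(1+\tfrac12\log(1+Y)\bigr)$, which is the stated bound. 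If you replace your model-ODE paragraph by this bootstrap computation, the rest of your outline goes through unchanged. (One minor point: the exponential bound for $\Theta$ comes from the \emph{standard} transport estimate for $\theta$ in $B^1_{\infty,1}$, not from the null-index estimate applied to the equation for $\nabla\theta$, whose source term $-{}^{T}\nabla u\cdot\nabla\theta$ would otherwise have to be estimated separately; the paper uses $\|\nabla u\|_{B^0_{\infty,1}}$ in the exponent, which is still $\lesssim\Omega$.)
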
 
\begin{proof}
Let us first notice that, according to the continuation criteria
derived in Theorem \ref{th:boussinesq}, 
it suffices to show that if the solution is defined on $[0,T[\times\R^n$ with
$T\leq T^*$ and  $T^*$ as above, then 
$\omega$ and $\nabla\theta$  are bounded in 
$L^\infty(0,T;B^0_{\infty,1}\cap L^r).$ 
\smallbreak
Now,  estimates for 
the transport  equation in Besov spaces
(see e.g. \cite{BCD}, Chap. 3) yield
\begin{equation}\label{eq:besov1}
\|\nabla\theta(t)\|_{B^0_{\infty,1}}\leq \|\nabla\theta_0\|_{B^0_{\infty,1}}
e^{C\int_0^t\|\nabla u\|_{B^0_{\infty,1}}\,d\tau}.
\end{equation}

Of course, standard $L^r$ estimates for the transport equation imply that
\begin{equation}\label{eq:Lr}
\|\omega(t)\|_{L^r}\leq\|\omega_0\|_{L^r}+\int_0^t\|\d_1\theta\|_{L^r}
\quad\hbox{and}\quad
\|\nabla\theta(t)\|_{L^r}\leq\|\nabla\theta_0\|_{L^r}e^{\int_0^t\|\nabla u\|_{L^\infty}\,d\tau}.
\end{equation}

Let us finally  notice that putting together embedding, Inequality \eqref{eq:BS}
and the remark that follows Proposition \ref{p:properties}, we have
$$
\|\nabla u\|_{L^\infty}\lesssim\|\nabla u\|_{B^0_{\infty,1}}\lesssim
\|\omega\|_{B^0_{\infty,1}\cap L^r}.
$$

Therefore, denoting
$$
\Omega(t):=\|\omega(t)\|_{B^0_{\infty,1}\cap L^r}\quad\hbox{and}\quad
\Theta(t):=\|\nabla\theta(t)\|_{B^0_{\infty,1}\cap L^r}
$$
and taking advantage of \eqref{eq:besov0}, \eqref{eq:besov1}, \eqref{eq:Lr},
 we conclude that 
$$\begin{array}{lll}
\Theta(t)&\leq& \Theta_0e^{C\int_0^t\Omega\,d\tau},\\[1ex]
\Omega(t)&\leq&\biggl(\Omega_0+\Int_0^t\Theta\,d\tau\biggr)
\biggl(1+C\int_0^t\Omega\,d\tau\biggr).
\end{array}
$$
Now, plugging the inequality for $\Theta(t)$ in the inequality
for $\Omega(t),$ we get
\begin{equation}\label{eq:Omega}
\Omega(t)\leq \biggl(\Omega_0+t\Theta_0e^{C\int_0^t\Omega\,d\tau}\biggr)
\biggl(1+C\int_0^t\Omega\,d\tau\biggr).
\end{equation}
Let us assume for a while that 
\begin{equation}\label{eq:smallt}
T\Theta_0e^{C\int_0^T\Omega\,d\tau}\leq \Omega_0.
\end{equation}
Then \eqref{eq:Omega} and Gronwall's lemma imply that
\begin{equation}
\Omega(t)\leq 2\Omega_0e^{2Ct\Omega_0}\quad\hbox{for all }
t\in[0,T].
\end{equation}
Therefore, for Condition \eqref{eq:smallt} to be satisfied, it suffices that 
$$
\Theta_0 T
\exp\biggl(e^{2CT\Omega_0}-1\biggr)\leq \Omega_0
$$
that is to say
\begin{equation}\label{eq:X}
X\exp(e^X-1)\leq Y\quad
\hbox{with } \ X:=2CT\Omega_0\ \ \hbox{ and }\ Y=\frac{2C\Omega_0^2}{\Theta_0}\cdotp
\end{equation}
Let us notice that
$$
X\leq e^X-1\leq \exp(e^X-1)-1\quad\hbox{for any }\ X\in\R^+.
$$
Hence Inequality \eqref{eq:X} is satisfied provided that
$$
\exp\bigl(2(e^X-1)\bigr)\leq 1+Y.
$$
So we easily gather from a bootstrap argument that the lifespan
$T^*$ satisfies 
$$
T^*\geq \frac{1}{2C\Omega_0}\log\biggl(1+\frac12\log\biggl(1+\frac{2C\Omega_0^2}{\Theta_0}\biggr)\biggr),
$$
which is exactly the desired inequality. 
\end{proof}
\begin{remark} In the case where the solution is $C^{1,r}$  for some $r\in(0,1)$
(an assumption which is not satisfied in the critical regularity case), 
 one may first write estimates for $\|\omega\|_{L^\infty}$ and $\|\omega\|_{C^r},$
and next use the classical logarithmic inequality for bounding $\|\nabla u\|_{L^\infty}$
in terms of $\|\omega\|_{L^\infty}$ and $\|\omega\|_{C^r}.$
This does not improve the lower
bound for the lifespan, though.
\end{remark}

%%%%%%%%%%%%%%%%%%%%%%%%%%%%%%%%%%%%%%%%%%%

\section{The axisymmetric incompressible Euler equations}

We now consider the \emph{incompressible Euler equations}
\eqref{eq:euler}. As recalled in the introduction, 
Euler equations are  globally well-posed in dimension $2.$
In dimension $d\geq3,$  the global well-posedness issue
  has remained unsolved 
unless some property of symmetry is satisfied : it is known 
that axisymmetric or helicoidal \emph{without swirl} data generate global solutions  (see e.g. \cite{D} and the
references therein for more details). 

In the general case, an easy scaling argument
similar to that of the Boussinesq system 
yields that for data of size $\eps,$ the lifespan is at least of order $\eps^{-1}.$
\smallbreak
Here we want to focus on the axisymmetric solutions to  Euler equations \emph{with swirl},
that is on solutions $u$ to \eqref{eq:euler} such that, in cylindrical coordinates,
\begin{equation}\label{eq:axi}
u(r,z)=u^r(r,z)e_r+u^\theta(r,z)e_\theta+u^z(r,z)e_z.
\end{equation}
Recall that the corresponding vorticity reads
$\omega(r,z)=\omega^r(r,z)e_r+\omega^\theta(r,z)e_\theta
+\omega^z(r,z)e_z$
with 
$$
\omega^r(r,z)=-\d_zu^\theta e_r,\quad
\omega^\theta(r,z)=\d_zu^r-\d_ru^z,\quad
\omega^z(r,z)=\frac1r\d_r(ru^\theta).
$$
With this notation,  axisymmetric solutions satisfy
(see e.g. \cite{BM})
\begin{equation}\label{eq:EulerS}
\left\{\begin{array}{l}
\tilde D_tu^r+\d_r\Pi=r^{-1}(u^\theta)^2,\\[1ex]
\tilde D_tu^\theta=-r^{-1}u^ru^\theta,\\[1ex]
\tilde D_tu^z+\d_z\Pi=0,\\[1ex]
\d_r(ru^r)+\d_z(ru^z)=0,\end{array}\right.
\quad\qquad\hbox{with }\  \tilde D_t:=\d_t+u^r\d_r+u^z\d_z.
\end{equation}
As pointed out  in \cite{ES}, there is a striking similarity
between the two-dimensional Boussinesq system \eqref{eq:boussinesq} satisfied by $(\theta,\omega)$
in the previous section, and the equations satisfied by $(u^\theta,\omega^\theta)$ here.
Indeed, 
$$\left\{
\begin{array}{l}
\tilde D_t(ru^\theta)=0,\\[1ex]
\tilde D_t\omega^\theta-\frac1ru^r\omega^\theta-\frac1r\d_z(u^\theta)^2=0
\end{array}\right.
$$
whence, denoting $\Gamma:=(ru^\theta)^2$ and $\zeta:=r^{-1}\omega^\theta,$ we have
\begin{equation}\label{eq:EulerSS}
\tilde D_t\Gamma=0\quad\hbox{and}\quad
\tilde D_t\zeta=\frac1{r^4}\d_z\Gamma.
\end{equation}

 Therefore, up to the singular coefficient $1/r^4,$ 
 the functions $\Gamma=\Gamma(r,z)$  and $\zeta=\zeta(r,z)$ play the same role as the temperature
and the vorticity, respectively,  in the 2D Boussinesq system. 
Keeping in mind that data such that $u_0^\theta\equiv0$ generate global solutions,
it is natural to study whether having $r^{-1}\omega_0^\theta=\cO(1)$
and $ru_0^\theta=\cO(\eps)$ gives rise to a 
family of solutions with lifespan going to infinity when $\eps$ goes to $0.$
\smallbreak
For  technical reasons however, due to the singularity near the 
axis, we shall consider the axisymmetric Euler equations 
in a smooth bounded axisymmetric domain $\Omega$ of $\R^3$ such that, 
for some given $0<r_0<R_0,$
\begin{equation}\label{eq:domain}
\Omega\subset\bigl\{(x,y,z)\in\R^3: r_0<\sqrt{x^2+y^2}<R_0\bigr\}\cdotp
\end{equation}
Let us first give  a local well-posedness result for the Euler equation in a domain:
\begin{theorem}\label{theorem:euler0} Let $(s,p,q)$ satisfy Condition \eqref{eq:conditionC}.
Let $u_0$ be in $B^s_{p,q}(\Omega)$ with $\div u_0=0$ and  $u_0$
 tangent to the boundary of $\Omega.$ 
 Then System \eqref{eq:euler} with slip boundary conditions 
 has a unique local solution $u$ in $\cC_w(]-T,T[;B^s_{p,q}(\Omega))$
 (or in  $\cC(]-T,T[;B^s_{p,q}(\Omega))$ if $q<\infty$). 
 
 If in addition $u_0$ satisfies \eqref{eq:axi} then $u$ satisfies \eqref{eq:EulerS}.
\end{theorem}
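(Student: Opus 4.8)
The plan is to reduce the problem to the already-understood whole-space theory by the standard device of extending the velocity and working with the vorticity formulation, using the geometry of $\Omega$ to avoid the singularity near the axis. First I would set up the boundary-value problem properly: the pressure $\Pi$ is recovered from $u$ by solving the Neumann problem $\Delta\Pi=-\div(u\cdot\nabla u)$ in $\Omega$ with $\partial_\nu\Pi=-(u\cdot\nabla u)\cdot\nu$ on $\partial\Omega$, which is solvable precisely because $u$ is tangent to $\partial\Omega$ and $\div u=0$, and elliptic regularity in the bounded smooth domain $\Omega$ gives $\nabla\Pi\in B^s_{p,q}(\Omega)$ with the expected tame bound. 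This turns \eqref{eq:euler} into a transport equation $\partial_t u+u\cdot\nabla u=-\nabla\Pi$ for a divergence-free field tangent to the boundary, so the flow of $u$ preserves $\Omega$ and the slip condition is propagated.

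Next I would run the usual a priori estimates: combine the transport estimate in $B^s_{p,q}(\Omega)$ (which holds on a smooth bounded domain, with the flow staying inside $\Omega$) with the commutator estimate of the Appendix and the elliptic bound for $\nabla\Pi$, close the estimate by Gronwall, and obtain a uniform bound on $[0,T]$ for $T$ small depending only on $\|u_0\|_{B^s_{p,q}(\Omega)}$. Existence then follows by the standard scheme — mollify the data (or use a Friedrichs/Galerkin-type approximation compatible with the boundary), pass to the limit using the uniform bounds and a compactness argument, obtaining a solution in $\cC_w(]-T,T[;B^s_{p,q}(\Omega))$, and in $\cC(]-T,T[;B^s_{p,q}(\Omega))$ when $q<\infty$ by the usual time-continuity upgrade. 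Uniqueness is a $\log$-Lipschitz/Osgood estimate at the level of $L^2$ (or $L^p$) for the difference of two solutions, controlled by $\|\nabla u\|_{L^\infty}$, exactly as in the whole-space case. The slip boundary condition is maintained throughout because it is equivalent to the flow map preserving $\Omega$, which is automatic from the transport structure.

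For the last sentence, I would verify that the axisymmetric ansatz \eqref{eq:axi} is preserved by the flow. Since $\Omega$ is axisymmetric, rotation about the $z$-axis is a symmetry of the equation and of the boundary condition; by uniqueness, a solution with axisymmetric data stays axisymmetric for all $t\in\,]-T,T[$. Rewriting \eqref{eq:euler} in cylindrical coordinates for such a $u$ then yields exactly \eqref{eq:EulerS} — this is a direct computation using $\tilde D_t=\partial_t+u^r\partial_r+u^z\partial_z$ and the formulas $\nabla\Pi=\partial_r\Pi\,e_r+\partial_z\Pi\,e_z$ (no $e_\theta$ component, since $\Pi$ is axisymmetric), together with $e_r\partial_\theta=e_\theta$, $e_\theta\partial_\theta=-e_r$, which produce the centrifugal term $r^{-1}(u^\theta)^2$ in the radial equation and the term $-r^{-1}u^r u^\theta$ in the $\theta$-equation. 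One must also check the divergence-free condition becomes $\partial_r(ru^r)+\partial_z(ru^z)=0$.

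The main obstacle I expect is not any single estimate but the bookkeeping needed to run the transport and elliptic theory on the bounded domain $\Omega$ rather than on $\R^N$: one needs a usable theory of Besov spaces on $\Omega$ (via restriction/extension), a transport estimate valid when the flow stays in $\Omega$, and Neumann-problem elliptic regularity in $B^s_{p,q}(\Omega)$ with tame dependence — all of which are standard but must be invoked carefully, and this is presumably why the authors place \eqref{eq:domain} as a hypothesis (so that $r$ is bounded away from $0$, keeping $1/r$ and $1/r^4$ harmless) and state the result only for a smooth bounded $\Omega$. I would therefore structure the write-up to isolate these domain-dependent ingredients as cited facts, then mimic the $\R^N$ argument of \cite{BCD}, Chap. 7, nearly verbatim.
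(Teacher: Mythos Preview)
Your proposal is correct and is essentially the argument that the paper invokes: the paper's own proof is a two-line citation to Dutrifoy \cite{Du}, observing that his whole-space-style a priori estimates for transport equations carry over to smooth bounded domains and that the critical endpoint $s=1+3/p$, $q=1$ works by the same method; the preservation of axisymmetry is, exactly as you say, a consequence of uniqueness plus rotational invariance. One small misattribution: the hypothesis \eqref{eq:domain} (that $r$ is bounded away from $0$) is \emph{not} an assumption of Theorem~\ref{theorem:euler0} --- it only enters in the subsequent lifespan theorem, where the factors $1/r$ and $1/r^4$ actually appear; the local well-posedness statement holds for any smooth bounded axisymmetric $\Omega$.
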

\begin{proof}
This statement has been essentially proved  by A. Dutrifoy in \cite{Du} except in the critical case $s=1+3/p$ and $r=1.$
However, the critical case may be handled by the same method\footnote{Proving a continuation
criterion involving the vorticity was the main purpose of Dutrifoy's paper, and this requires
that $s>1+3/p.$ This is probably the
reason why the statement in the critical case is not given therein.} as it relies on a priori estimates
for transport equations which are also true in this case.

The last part of the statement is a classical consequence of the uniqueness and of the symmetry of
the data $u_0.$
\end{proof}

One can now state the main result of this part. 
\begin{theorem}\label{theorem:euler}
Let $u_0$ be an axisymmetric divergence-free vector-field
in $B^s_{p,q}(\Omega)$ with $(s,p,q)$ satisfying 
\eqref{eq:conditionC} and $\Omega$ a bounded domain satisfying \eqref{eq:domain}.
Suppose in addition that $u_0|_{\d\Omega}$ is tangent to the boundary of $\Omega.$ 
 Then the lifespan $T^*$ to the solution of \eqref{eq:euler}
satisfies
$$
T^*\geq \frac{1}{C\|\omega_0^\theta\|_{B^0_{\infty,1}}}\log\biggl(1
+\frac12\log\biggl(1
+\frac{C\|\omega_0^\theta\|_{B^0_{\infty,1}}}{\|(u_0^\theta)^2\|_{B^1_{\infty,1}}}\biggr)\biggr)
$$
for some constant $C$ depending  only  on $\Omega.$ 
\end{theorem}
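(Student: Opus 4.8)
The plan is to mimic the proof of the two-dimensional Boussinesq lifespan bound, transporting the argument through the analogy \eqref{eq:EulerSS}, with the crucial simplification that on the domain $\Omega$ the coefficient $1/r^4$ is bounded above and below by constants depending only on $r_0,R_0$. First I would set $\zeta:=r^{-1}\omega^\theta$ and $\Gamma:=(ru^\theta)^2$ and record that, by \eqref{eq:EulerSS} and the transport estimates in Besov spaces with null regularity index (Vishik, Hmidi--Keraani), one has
$$
\|\zeta(t)\|_{B^0_{\infty,1}}\leq\Bigl(\|\zeta_0\|_{B^0_{\infty,1}}+\int_0^t\|r^{-4}\d_z\Gamma\|_{B^0_{\infty,1}}\,d\tau\Bigr)\Bigl(1+C\int_0^t\|\nabla\tilde u\|_{L^\infty}\,d\tau\Bigr),
$$
where $\tilde u:=u^re_r+u^ze_z$ is the (divergence-free, on the weighted sense $\d_r(ru^r)+\d_z(ru^z)=0$) transporting field, and a standard transport estimate gives
$$
\|\Gamma(t)\|_{B^1_{\infty,1}}\leq\|\Gamma_0\|_{B^1_{\infty,1}}\,e^{C\int_0^t\|\nabla\tilde u\|_{L^\infty}\,d\tau}.
$$
Here I would use that multiplication by the smooth, bounded-away-from-zero function $r^{-4}$ (restricted to $\Omega$) and the operator $\d_z$ are bounded on the relevant Besov spaces on $\Omega$, so that $\|r^{-4}\d_z\Gamma\|_{B^0_{\infty,1}}\lesssim\|\Gamma\|_{B^1_{\infty,1}}$, the implicit constant depending only on $\Omega$.

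The second ingredient is the Biot--Savart-type bound controlling the full velocity gradient by the swirl component of the vorticity. Since $u^\theta$, hence $\omega^\theta=\d_zu^r-\d_ru^z$, and the divergence-free condition determine $\tilde u$ (and the full $u$) up to harmless lower-order terms on the bounded domain $\Omega$, elliptic regularity for the div-curl system with slip boundary conditions yields
$$
\|\nabla u\|_{L^\infty}\lesssim\|\nabla u\|_{B^0_{\infty,1}}\lesssim\|\omega^\theta\|_{B^0_{\infty,1}}+\hbox{l.o.t.}\lesssim\|\zeta\|_{B^0_{\infty,1}}+\hbox{l.o.t.},
$$
with constants depending only on $\Omega$; the lower-order terms are controlled by conserved quantities (e.g. the energy, or $\|ru^\theta\|_{L^\infty}$ which is transported) and can be absorbed. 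The upshot, writing $\Omega(t):=\|\zeta(t)\|_{B^0_{\infty,1}}$ and $\Theta(t):=\|\Gamma(t)\|_{B^1_{\infty,1}}$, is the coupled system
$$
\Theta(t)\leq\Theta_0\,e^{C\int_0^t\Omega\,d\tau},\qquad
\Omega(t)\leq\Bigl(\Omega_0+\int_0^t\Theta\,d\tau\Bigr)\Bigl(1+C\int_0^t\Omega\,d\tau\Bigr),
$$
which is formally identical to the system obtained in the proof of the preceding theorem (with $\Theta_0\sim\|(u_0^\theta)^2\|_{B^1_{\infty,1}}$ and $\Omega_0\sim\|\omega_0^\theta\|_{B^0_{\infty,1}}$, up to constants depending only on $\Omega$).

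From here the argument is purely that of the Boussinesq case: assume provisionally $T\Theta_0e^{C\int_0^T\Omega\,d\tau}\leq\Omega_0$, deduce by Gronwall $\Omega(t)\leq2\Omega_0e^{2Ct\Omega_0}$, and close the bootstrap by requiring $X\exp(e^X-1)\leq Y$ with $X=2CT\Omega_0$ and $Y=2C\Omega_0^2/\Theta_0$, using $X\leq e^X-1\leq\exp(e^X-1)-1$ to reduce this to $\exp(2(e^X-1))\leq1+Y$ and then reading off the stated lower bound for $T^*$. Finally, one invokes the continuation criterion — here the analogue of item (ii)/(iii) of Theorem \ref{th:boussinesq} adapted to the axisymmetric Euler system in a domain (boundedness of $\omega^\theta$ and of $\d_z\Gamma$, equivalently of $\zeta$ and $\Gamma$, in the relevant norm on $[0,T)$ prevents blow-up), which follows from Theorem \ref{theorem:euler0} together with the estimates just derived — to conclude that the solution extends past any $T$ below the claimed bound.

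The main obstacle I expect is the careful treatment of the boundary and of the weighted div-curl system on $\Omega$: proving the estimate $\|\nabla u\|_{B^0_{\infty,1}}\lesssim\|\omega^\theta\|_{B^0_{\infty,1}}+\hbox{l.o.t.}$ rigorously requires knowing that reconstructing an axisymmetric divergence-free field tangent to $\d\Omega$ from its swirl and from $\omega^\theta$ is an elliptic problem with $B^0_{\infty,1}$-regular estimates, and identifying precisely which lower-order terms appear and checking they stay bounded by transported/conserved quantities. The hypothesis \eqref{eq:domain} that $\Omega$ avoids the axis is exactly what makes $1/r^4$ a harmless smooth multiplier and makes this elliptic theory standard; once that is granted, the rest of the proof is a faithful transcription of the two-dimensional Boussinesq argument.
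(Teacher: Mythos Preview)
Your approach mirrors the paper's proof almost exactly: the same transport estimates for $\zeta$ and $\Gamma$, the same use of the Vishik--Hmidi--Keraani improvement at regularity index $0$ (carried to the bounded-domain setting via Dutrifoy's estimates), the same observation that $r^{-4}\in C^{0,1}(\Omega)$, and the identical bootstrap/Gronwall endgame.

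The one place where the paper is sharper than your sketch is precisely the Biot--Savart step you flag as the main obstacle. You aim to bound the full $\|\nabla u\|_{B^0_{\infty,1}}$ by $\|\omega^\theta\|_{B^0_{\infty,1}}$ plus lower-order terms to be absorbed by conserved quantities; the paper instead notes that only $\nabla\tilde u$ ever enters the transport estimates (since $\tilde D_t=\d_t+\tilde u\cdot\nabla$), and bounds $\|\nabla\tilde u\|_{B^0_{\infty,1}}$ by $\|\omega^\theta e_\theta\|_{B^0_{\infty,1}}$ directly, with \emph{no} lower-order terms. Concretely, one solves the Neumann problem $-\Delta\tilde\psi=\omega^\theta e_\theta$, $\d_n\tilde\psi=0$ on $\d\Omega$, $\int_\Omega\tilde\psi=0$; Proposition~\ref{p:elliptic} (interpolation of Triebel's H\"older-space elliptic estimates) gives $\|\nabla^2\tilde\psi\|_{B^0_{\infty,1}}\lesssim\|\omega^\theta e_\theta\|_{B^0_{\infty,1}}$. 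Then one checks that $\tilde u$ and $\nabla\wedge\tilde\psi$ have the same divergence, the same curl, and---by axisymmetry of both the fields and the domain---the same (null) circulation on each component of $\d\Omega$, hence coincide. This yields $\|\nabla\tilde u\|_{B^0_{\infty,1}}\lesssim\|\omega^\theta e_\theta\|_{B^0_{\infty,1}}\lesssim\|\zeta\|_{B^0_{\infty,1}}$, the last inequality because $re_\theta\in C^{0,1}(\Omega)$. So there are no ``l.o.t.'' to chase down, and the obstacle you anticipated is disposed of by this clean potential-theoretic identification rather than by absorption.
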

\begin{proof}
It suffices to bound the norm of $u$ in $B^1_{\infty,1}$ 
as it controls high norms (see \cite{Du} and
notice that $B^1_{\infty,1}$ embeds in $C^{0,1}$). 
Let $\tilde u:=u^re_r+u^ze_z.$
Denote by $\tilde\psi$ the solution given by Proposition \ref{p:elliptic}
 to the elliptic equation 
$$\left\{
 \begin{array}{ccc}
-\Delta\tilde\psi=\omega^\theta e_\theta&\hbox{ in }& \Omega,\\[1ex]
\d_n\tilde\psi=0&\hbox{ on }& \d\Omega,\end{array} 
\right.\qquad\qquad\int_\Omega \tilde\psi\,dx=0.
$$
Notice that $$\div\tilde u=0=\div(\nabla\wedge\tilde\psi)\quad\hbox{and that}\quad
\nabla\wedge\tilde u=\omega^\theta e_\theta=\nabla\wedge(\nabla\wedge\tilde\psi).
$$
As, in addition,  both $\tilde u$ and $\nabla\wedge\tilde\psi$ have null circulation on 
the components of $\d\Omega$ (a consequence of the symmetry properties
of those two functions and of the domain), they coincide.
Hence, Proposition \ref{p:elliptic} ensures that 
\begin{equation}\label{eq:BS1}
\|\nabla\tilde u\|_{B^0_{\infty,1}}\leq C\|\omega^\theta e_\theta\|_{B^0_{\infty,1}}.
\end{equation}
This inequality will enable us to adapt to the axisymmetric Euler equations
the proof of  lower bounds for the lifespan of solutions.

We proceed as follows.  According to the work by A. Dutrifoy (see
in particular Prop. 6 and Cor. 5 in \cite{Du}) for the transport equation
in a smooth bounded domain, estimates in Besov spaces $B^s_{p,q}(\Omega)$ 
are the same as in the whole space case.
From this, one may deduce by following the method of \cite{HK} 
that in  the particular case $s=0$, the estimates improve (as in \eqref{eq:besov0}). So we get,
 bearing   \eqref{eq:EulerSS} in mind:
$$
\|\zeta(t)\|_{B^0_{\infty,1}}\leq\biggl(\|\zeta_0\|_{B^0_{\infty,1}}
+\Int_0^t\|r^{-4}\d_z\Gamma\|_{B^0_{\infty,1}}\,d\tau\biggr)
\biggl(1+C\int_0^t\|\nabla \tilde u\|_{L^\infty}\,d\tau\biggr).
$$
General Dutrifoy's estimates for the transport equation 
also imply that
$$
\|\Gamma(t)\|_{B^1_{\infty,1}}\leq\|\Gamma_0\|_{B^1_{\infty,1}}
\exp\biggl(C\Int_0^t\|\nabla \tilde u\|_{B^0_{\infty,1}}\,d\tau\biggr).
$$
Now, the important observation is that $1/r^4$ is in $C^{0,1}(\Omega)$ (for $r\geq r_0$ in 
$\Omega$). Hence 
$$
\|r^{-4}\d_z\Gamma\|_{B^0_{\infty,1}}\leq C\|\d_z\Gamma\|_{B^0_{\infty,1}},
$$
whence 
$$
\|\zeta(t)\|_{B^0_{\infty,1}}\leq \biggl(\|\zeta_0\|_{B^0_{\infty,1}}
+C\Int_0^t\|\d_z\Gamma\|_{B^0_{\infty,1}}\,d\tau\biggr)
\biggl(1+C\int_0^t\|\nabla \tilde u\|_{L^\infty}\,d\tau\biggr).
$$
Finally, according to  \eqref{eq:BS1} and classical  embedding properties, we have
$$
\|\nabla \tilde u\|_{L^\infty}\lesssim \|\nabla\tilde u\|_{B^0_{\infty,1}}
\lesssim \|\omega^\theta e_\theta\|_{B^0_{\infty,1}}.
$$
As $\omega^\theta e_\theta=\zeta\, r e_\theta$ and, under our assumption on 
$\Omega,$ $r e_\theta$ is in $C^{0,1},$ one may thus conclude that 
$$
\|\nabla \tilde u\|_{L^\infty}\lesssim  \|\zeta\|_{B^0_{\infty,1}}.
$$ 
{}From this point, one may proceed exactly as for the Boussinesq system;
we deduce the following lower bound for the lifespan of the solution:
$$
T^*\geq \frac{1}{C\|\zeta_0\|_{B^0_{\infty,1}}}\log\biggl(1
+\frac12\log\biggl(1
+\frac{C\|\zeta_0\|_{B^0_{\infty,1}}}{\|\Gamma_0^2\|_{B^1_{\infty,1}}}\biggr)\biggr)\cdotp
$$
Of course, owing to the shape of $\Omega,$ up to an irrelevant constant, 
one may replace $\zeta_0$
with $\omega^\theta_0$ and $ru_0^\theta$ with $u_0^\theta,$ respectively.
\end{proof}

\begin{remark}
We believe Theorem \ref{theorem:euler} to be true in the case where $\Omega$ satisfying \eqref{eq:domain}
is unbounded. However, we refrained from giving  the statement as we
did not find in the literature the counterpart of Theorem \ref{theorem:euler0}
and of Proposition \ref{p:elliptic}. 

Let us emphasize however that unbounded domains have been considered in \cite{BF} (H\"older spaces), 
and \cite{J,K} (weighted Sobolev spaces).   By following Dutrifoy's approach, we do not see any obstruction 
to get similar results in the Besov space framework. This is only a matter
of having suitable extension operators available for the domain considered.

We also believe that Proposition \ref{p:elliptic} may be extended to unbounded domains provided
we prescribe some condition at infinity: the following inequality  
\begin{equation}\label{eq:BS2}
\|\nabla\tilde u\|_{B^0_{\infty,1}\cap L^r}\leq C\|\omega^\theta e_\theta\|_{B^0_{\infty,1}\cap L^r}.
\end{equation}
for any $r\in]1,+\infty[$ seems to be reasonable. However, as proving such inequalities 
is not the point of this paper, we restricted ourselves to bounded domains. 
\end{remark}

\subsection*{Acknowledgment} The author is indebted to O. Glass and  F. Sueur for 
pointing out references \cite{J,K,Triebel}.

%%%%%%%%%%%%%%%%%%%%%%%%%%%%%%%%%%%%%%%%%

\appendix
\section{}
\setcounter{equation}{0}

In this  appendix, we   recall the definition and a few properties of nonhomogeneous Besov spaces $B^s_{p,q},$
then  prove a commutator estimate.
\medbreak

Let us first introduce  a  dyadic partition of unity 
with respect to the Fourier variable (the so-called Littlewood-Paley decomposition): we 
fix a smooth radial function $\chi$ supported in (say) the ball $B(0,4/3),$ 
equals to $1$ in a neighborhood of $B(0,3/4)$
and such that $r\mapsto\chi(r\,e_r)$ is nonincreasing
over $\R_+,$ and set
$\varphi(\xi)=\chi(\xi/2)-\chi(\xi).$
\smallbreak
The {\it dyadic blocks} $(\Delta_j)_{j\in\Z}$
 are defined by
$$
\dj:=0\ \hbox{ if }\ j\leq-2,\quad\Delta_{-1}:=\chi(D)\quad\hbox{and}\quad
\Delta_j:=\varphi(2^{-j}D)\ \text{ if }\  j\geq0.
$$
It may be easily checked that the identity  $u=\sum_{j}\dj u$ holds true in the sense
of tempered distributions.

One can now define the  Besov space $B^s_{p,q}$ as the set of tempered distributions $u$ 
so that $\|u\|_{B^s_{p,q}}$ is finite, where
$$
\|u\|_{B^s_{p,q}}:=\bigg(\sum_{j} 2^{qjs}
\|\Delta_j  u\|^q_{L^p}\bigg)^{\frac{1}{q}}\ \text{ if }\ q<\infty
\quad\text{and}\quad
\|u\|_{B^s_{p,\infty}}:=\sup_{j}\left( 2^{js}
\|\Delta_j  u\|_{L^p}\right).
$$

 Roughly speaking, the elements of $B^s_{p,q}$ have ``$s$ derivatives in $L^p$''. 
 For instance,  the Besov space $B^s_{2,2}$ coincides
  with the nonhomogeneous Sobolev space $H^s$  (for any $s\in\R$), and  $B^s_{\infty,\infty}$ coincides
  with the H\"older space $C^s,$ if $s\in\R_+\setminus\N.$
\smallbreak
In this paper, we use freely the following properties for Besov spaces (see e.g. \cite{BCD}, Chap. 2):
\begin{proposition}\label{p:properties}
 Let$(s,p,q)\in\R\times[1,+\infty]^2.$
\begin{itemize}
\item The Besov space  $B^s_{p,q}$ is (continuously) embedded
in the set $C^{0,1}$ of Lipschitz  bounded functions if and only if
 Condition \eqref{eq:conditionC} is satisfied.
 \item  The gradient operator
maps $B^s_{p,q}$ in $B^{s-1}_{p,q}.$
\item  More generally, if  $F:\R^N\rightarrow\R$
is a  smooth homogeneous function of degree $m$ away from a neighborhood of the origin
 then for all $(p,q)\in[1,\infty]^2$ and $s\in\R,$  Operator $F(D)$ maps $B^s_{p,q}$
in $B^{s-m}_{p,q}.$ 
\end{itemize}
\end{proposition}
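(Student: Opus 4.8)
The plan is to reduce all three assertions to a single dyadic estimate obtained by combining the Littlewood--Paley decomposition $u=\sum_{j}\dj u$ with Bernstein's inequality: for a distribution $f$ whose Fourier transform is supported in a ball of radius $\sim 2^j$ one has $\|\partial^\alpha f\|_{L^b}\lesssim 2^{j(|\alpha|+N(\frac1a-\frac1b))}\|f\|_{L^a}$ for $b\geq a$. I would treat the \emph{multiplier} statement (third bullet) first, since the gradient (second bullet) is the special case $F(\xi)=i\xi_k$, which is smooth and homogeneous of degree one. For $j\geq0$ the block $\dj u$ has spectrum in an annulus $\{|\xi|\sim 2^j\}$ on which $F$ is smooth and homogeneous, so with $\tilde\varphi$ a fixed smooth function supported away from the origin and equal to $1$ on the support of $\varphi$ I would write
$$
F(\xi)\varphi(2^{-j}\xi)=2^{jm}\,g(2^{-j}\xi),\qquad g:=F\tilde\varphi,
$$
using homogeneity together with $\tilde\varphi(2^{-j}\xi)\equiv1$ on the spectrum of $\dj u$. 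Since Fourier multipliers commute, $\dj F(D)u=F(D)\dj u=2^{jm}g(2^{-j}D)\dj u$, and $g(2^{-j}D)$ is convolution against $2^{jN}\check g(2^j\cdot)$, whose $L^1$ norm equals $\|\check g\|_{L^1}$ independently of $j$. Young's inequality then gives $\|\dj F(D)u\|_{L^p}\lesssim 2^{jm}\|\dj u\|_{L^p}$, hence $2^{j(s-m)}\|\dj F(D)u\|_{L^p}\lesssim 2^{js}\|\dj u\|_{L^p}$, and taking the $\ell^q$ norm yields $\|F(D)u\|_{B^{s-m}_{p,q}}\lesssim\|u\|_{B^s_{p,q}}$. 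The gradient case then follows at once, the low-frequency block $\Delta_{-1}u$ being harmless because $F$ is a polynomial (smooth at the origin) and Bernstein gives $\|\nabla\Delta_{-1}u\|_{L^p}\lesssim\|\Delta_{-1}u\|_{L^p}$.

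For the sufficiency half of the \emph{embedding} I would control $\|u\|_{L^\infty}$ and $\|\nabla u\|_{L^\infty}$, the latter being the binding constraint. Bernstein gives $\|\nabla\dj u\|_{L^\infty}\lesssim 2^{j(1+N/p)}\|\dj u\|_{L^p}=2^{j(1+N/p-s)}c_j$ with $c_j:=2^{js}\|\dj u\|_{L^p}$, so that $(c_j)_j\in\ell^q$ of norm $\|u\|_{B^s_{p,q}}$ (and $\|\dj u\|_{L^\infty}\lesssim 2^{j(N/p-s)}c_j$, a weaker requirement). If $s>1+N/p$ the geometric factor is summable and H\"older yields $\sum_j 2^{j(1+N/p-s)}c_j\lesssim\|(c_j)\|_{\ell^q}$; if $s=1+N/p$ the exponent vanishes and one needs $\sum_j c_j=\|(c_j)\|_{\ell^1}<\infty$, i.e. $q=1$. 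In either case $\sum_j\dj u$ and $\sum_j\nabla\dj u$ converge normally in $L^\infty$, so $u\in C^{0,1}$ with $\|u\|_{C^{0,1}}\lesssim\|u\|_{B^s_{p,q}}$.

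The necessity half is where I expect the real work: whenever Condition \eqref{eq:conditionC} fails I must produce $u\in B^s_{p,q}$ that is not Lipschitz. I would take a lacunary sum $u=\sum_k a_k w_k$, where $w_k$ is a fixed bump rescaled to frequency $2^{j_k}$ (with $j_k\to\infty$ sparse), normalised so that $\|w_k\|_{L^p}\sim1$ and, by the sharpness of Bernstein's inequality for such single-block bumps, $\|\partial_1 w_k\|_{L^\infty}\gtrsim 2^{j_k(1+N/p)}$ with $\partial_1 w_k(0)$ of constant sign. Then $\|u\|_{B^s_{p,q}}\sim\|(2^{j_k s}a_k)_k\|_{\ell^q}$ while $\partial_1 u(0)\gtrsim\sum_k a_k 2^{j_k(1+N/p)}$, and choosing $(2^{j_k s}a_k)_k\in\ell^q$ with $\sum_k 2^{j_k(1+N/p)}a_k=+\infty$ is possible exactly when $s<1+N/p$, or when $s=1+N/p$ and $q>1$ (take $2^{j_k s}a_k\sim 1/k$, which lies in $\ell^q$ for $q>1$ but not in $\ell^1$), forcing $u\notin C^{0,1}$. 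The main subtlety I anticipate, beyond this endpoint counterexample, concerns the low-frequency block $\Delta_{-1}$ in the multiplier statement when $F$ is singular at the origin: there the rescaling identity fails, and $F(D)\Delta_{-1}u$ must either be interpreted through a smooth extension of $F$ across the origin or split off and estimated separately, precisely as is done immediately after this proposition when bounding $\nabla u$ in terms of $\omega$.
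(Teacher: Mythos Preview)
The paper does not prove this proposition: it is stated as a known fact with a reference to \cite{BCD}, Chap.~2, so there is no ``paper's proof'' to compare against. Your argument is correct and is essentially the standard one found in that reference: the multiplier bound via the rescaling $F(\xi)\tilde\varphi(2^{-j}\xi)=2^{jm}g(2^{-j}\xi)$ and Young's inequality, the gradient as the special case $m=1$, the sufficiency of \eqref{eq:conditionC} via Bernstein and $\ell^q$--$\ell^1$ summation, and the necessity via a lacunary single-block counterexample are exactly the textbook route.

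One small clarification on the hypothesis: the phrase ``smooth homogeneous function of degree $m$ away from a neighborhood of the origin'' is meant to be read as ``$F$ is smooth on all of $\R^N$, and coincides with a homogeneous function of degree $m$ outside a neighborhood of the origin'' (this is precisely how it is used in the Remark following the proposition, where the Biot--Savart symbol is first truncated by $\Id-\Delta_{-1}$ so as to become smooth at $0$). Under that reading, the low-frequency block $\Delta_{-1}F(D)u$ is governed by the compactly supported smooth symbol $\chi F$, hence by convolution with a Schwartz function, and no separate treatment is needed; your closing caveat about singular-at-the-origin multipliers, while not wrong, addresses a more general situation than the proposition actually claims.
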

\begin{remark}
{}From the last property, 
given that  the  Biot-Savart operator $\cB:\omega\mapsto\nabla u$
is an homogeneous smooth multiplier of degree $0,$  we deduce that 
$(\Id-\Delta_{-1})\cB$ is a  self-map on $B^s_{p,q}$ \emph{for any $s\in\R$ and $1\leq p,q\leq\infty.$} This implies Inequality \eqref{eq:BSbis}. 
\end{remark}

The definition of Besov spaces may be extended by restriction 
to general domains $\Omega$ of $\R^N$:
\begin{definition} Let $\Omega$ be a domain of $\R^N,$ and $(s,p,q)\in\R\times[1,+\infty]^2.$
We denote by $B^s_{p,q}(\Omega)$ the set of distributions $u$ over $\Omega$
which are the restriction (in the sense of distributions) to some $\tilde u$ in $B^s_{p,q}(\R^N).$
The space $B^s_{p,q}(\Omega)$ is endowed with the norm
$$
\|u\|_{B^s_{p,q}(\Omega)}:=\inf \|\tilde u\|_{B^s_{p,q}(\R^N)}
$$
where the infimum is taken over the set of $\tilde u$ in $B^s_{p,q}(\R^N)$ such that $u$ coincides
with the restriction of $\tilde u$ to $\Omega.$
\end{definition}

\smallbreak
The following result will be needed in the proof of Inequality \eqref{eq:BS1}.
\begin{proposition}\label{p:elliptic}
Let $\Omega$ be a smooth bounded domain of $\R^N$
and  $\omega$ be in $B^0_{\infty,1}(\Omega).$
If in addition the mean value of $\omega$ on $\Omega$ is zero then the Neumann equation
$$\left\{
\begin{array}{lll}
-\Delta\psi=\omega&\hbox{ in }& \Omega,\\[1ex]
\d_n\psi=0&\hbox{ on }& \d\Omega,\end{array} 
\right.\qquad\qquad\int_\Omega \psi\,dx=0,
$$
has a unique solution $\psi$ in $B^2_{\infty,1}(\Omega)$ and we have
$$
\|\nabla^2\psi\|_{B^0_{\infty,1}(\Omega)}\leq C\|\omega\|_{B^0_{\infty,1}(\Omega)}.
$$
\end{proposition}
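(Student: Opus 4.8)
The plan is to establish the stated a priori estimate by the classical localization method---reducing to constant-coefficient model problems in $\R^N$ and in a half-space---and then to deduce existence from the $L^p$ theory. Uniqueness is immediate: if $\omega\equiv0$ then $\psi$ is harmonic with vanishing Neumann data and zero mean, hence $\psi\equiv0$. For existence, since $\omega\in B^0_{\infty,1}(\Omega)\hookrightarrow L^\infty(\Omega)\hookrightarrow L^p(\Omega)$ for every finite $p$, the classical $W^{2,p}$ Neumann theory provides a solution $\psi\in W^{2,p}(\Omega)$, unique once we impose $\int_\Omega\psi\,dx=0$; it then suffices to prove the estimate for such an (already fairly regular) $\psi$, as it will upgrade the regularity to $B^2_{\infty,1}(\Omega)$.

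The first ingredient is the whole-space model: if $-\Delta\psi=\omega$ in $\R^N$, then $\nabla^2\psi=m(D)\omega$ with $m(\xi)=-\xi\otimes\xi/|\xi|^2$, a matrix of homogeneous smooth multipliers of degree $0$ (a composition of Riesz transforms). Splitting $\omega=\Delta_{-1}\omega+(\Id-\Delta_{-1})\omega$ and invoking the remark that follows Proposition \ref{p:properties}, the high-frequency part obeys $\|(\Id-\Delta_{-1})\nabla^2\psi\|_{B^0_{\infty,1}}\lesssim\|\omega\|_{B^0_{\infty,1}}$, while the low-frequency part is harmless, being spectrally localized and controlled via Bernstein's inequality by $\|\psi\|_{L^2}$. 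The second ingredient is the flat Neumann problem $-\Delta\psi=\omega$ in $\R^N_+$ with $\d_{x_N}\psi=0$ on $\{x_N=0\}$: the even reflection $\tilde\psi(x',x_N):=\psi(x',-x_N)$ solves $-\Delta\tilde\psi=\tilde\omega$ in $\R^N$, with $\tilde\omega$ the even reflection of $\omega$, and classical results on reflection operators for Besov spaces on half-spaces (see e.g. \cite{Triebel}) ensure that even reflection maps $B^0_{\infty,1}(\R^N_+)$ into $B^0_{\infty,1}(\R^N)$ and $\{\psi\in B^2_{\infty,1}(\R^N_+):\d_n\psi|_{\{x_N=0\}}=0\}$ into $B^2_{\infty,1}(\R^N)$---the vanishing of the normal trace being exactly what is needed to gain two orders of smoothness at this endpoint exponent. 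Composing with the whole-space model yields the model estimate $\|\psi\|_{B^2_{\infty,1}(\R^N_+)}\lesssim\|\omega\|_{B^0_{\infty,1}(\R^N_+)}+\|\psi\|_{L^2(\R^N_+)}$.

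Next I would localize. Cover $\overline\Omega$ by finitely many small balls, interior and boundary ones; on each boundary ball a smooth diffeomorphism flattens $\d\Omega$, turning $-\Delta$ into an elliptic operator $\cL$ with smooth coefficients and $\d_n$ into a \emph{conormal} derivative. Given a subordinate partition of unity $(\chi_k)$, the function $\chi_k\psi$ solves $-\Delta(\chi_k\psi)=\chi_k\omega-2\nabla\chi_k\cdot\nabla\psi-(\Delta\chi_k)\psi$ in an interior chart, and the analogous variable-coefficient equation in a boundary chart, the Neumann condition being preserved. Freezing the coefficients of $\cL$ at the center of each chart and writing $\cL=\cL_0+(\cL-\cL_0)$, applying the two model estimates to $\cL_0$ (after a linear change of variables straightening both the principal part and the conormal derivative), and choosing the charts small enough that $\cL-\cL_0$ has operator norm at most $\frac12$ on second derivatives, we absorb that term and, summing over $k$, obtain
\[
\|\psi\|_{B^2_{\infty,1}(\Omega)}\le C\bigl(\|\omega\|_{B^0_{\infty,1}(\Omega)}+\|\psi\|_{B^1_{\infty,1}(\Omega)}\bigr).
\]
Because $B^2_{\infty,1}(\Omega)$ embeds compactly into $B^1_{\infty,1}(\Omega)$, which in turn embeds into $L^\infty(\Omega)$, Ehrling's lemma allows us to absorb the term $C\|\psi\|_{B^1_{\infty,1}(\Omega)}$ into the left-hand side at the cost of adding a multiple of $\|\psi\|_{L^\infty(\Omega)}$. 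Finally, the energy identity $\|\nabla\psi\|_{L^2}^2=\int_\Omega\omega\psi\,dx$, the zero-mean condition and the Poincar\'e--Wirtinger inequality give $\|\psi\|_{H^1(\Omega)}\lesssim\|\omega\|_{L^2(\Omega)}$; the $W^{2,p}$ Neumann estimate then yields $\|\psi\|_{W^{2,p}(\Omega)}\lesssim\|\omega\|_{L^p(\Omega)}+\|\psi\|_{L^2(\Omega)}$, and taking $p>N$ and using $W^{2,p}(\Omega)\hookrightarrow C^0(\overline\Omega)$ together with $B^0_{\infty,1}(\Omega)\hookrightarrow L^p(\Omega)$ we get $\|\psi\|_{L^\infty(\Omega)}\lesssim\|\omega\|_{B^0_{\infty,1}(\Omega)}$. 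Combining these bounds proves the proposition.

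The hard part will be the boundary analysis: making the reflection argument rigorous at the endpoint Besov regularities $B^0_{\infty,1}$ and $B^2_{\infty,1}$ (which are not covered by the most elementary reflection lemmas and genuinely require the vanishing of the normal trace of $\nabla\psi$), and correctly handling the conormal---rather than purely normal---boundary operator produced by flattening $\d\Omega$. Everything else amounts to routine Littlewood--Paley bookkeeping combined with classical $L^2$ and $L^p$ elliptic theory.
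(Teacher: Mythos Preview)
Your approach is correct in outline but takes a substantially different---and much longer---route from the paper. The paper's proof is two lines: (i) invoke Triebel's theorem that the Neumann solution map $T:\omega\mapsto\nabla^2\psi$ is bounded on $C^s(\Omega)=B^s_{\infty,\infty}(\Omega)$ for every $s>-1$; (ii) real-interpolate, using $B^0_{\infty,1}(\Omega)=(C^{-1/2}(\Omega),C^{1/2}(\Omega))_{1/2,1}$. All the hard boundary analysis is outsourced to the cited H\"older-scale result, and the awkward endpoint $q=1$ comes for free from the interpolation functor. Your localization/flattening/reflection/freezing scheme is essentially the machinery one would use to \emph{prove} Triebel's theorem from scratch, and you then handle the endpoint directly rather than by interpolation. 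This has the merit of being self-contained, but the reflection step for $B^2_{\infty,1}$ under the Neumann constraint (which you rightly flag as delicate) and the conormal bookkeeping are real work; the paper's interpolation shortcut sidesteps all of it.
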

\begin{proof} 
In \cite{Triebel}, Th. 4.4, it has been proved that, for any $s>-1,$
 if $\omega\in C^s(\Omega):=B^s_{\infty,\infty}(\Omega)$  (with $0$ mean value) then the above system 
 has a unique solution $\psi$ in $C^{s+2}(\Omega)$ satisfying 
 $$
 \|\nabla^2\psi\|_{C^s(\Omega)}\leq C\|\omega\|_{C^s(\Omega)}.
 $$
 So denoting $T:\omega\mapsto\nabla^2\psi,$ the result follows by interpolation : it is only a matter of using
the fact that  $B^0_{\infty,1}(\Omega)=(C^{-1/2}(\Omega),C^{1/2}(\Omega))_{1/2,1}.$
\end{proof}
\smallbreak
Let us now turn to the proof of Inequality \eqref{eq:com}. 
Let $\tilde u:=u-\Delta_{-1}u.$ 
We decompose the commutator as follows:
\begin{equation}\label{eq:dec}
[u,\Delta_j]\cdot\nabla\omega=\sum_{i=1}^6 R_j^i
\end{equation}
with, using the summation convention over repeated indices,
$$
\begin{array}{lll}
R_j^1:=[T_{\tilde u^k},\Delta_j]\d_k\omega,&&
R_j^2:=T_{\d_k\dj\omega}\tilde u^k,\\[1ex]
R_j^3:=-\dj T_{\d_k\omega}\tilde u^k,&&
R_j^4:=\d_kR(\tilde u^k,\dj \omega),\\[1ex]
R_j^5:=-\d_k\dj R(\tilde u^k,\omega),&&
R_j^6:=[\Delta_{-1}\tilde u^k,\dj]\d_k\omega.
\end{array}
$$
Above,  $T$ and  $R$ stand for the paraproduct and remainder
operators, respectively,  which are defined as follows (after J.-M. Bony
in \cite{Bony}):
$$
T_fg:=\sum_jS_{j-1}f\dj g\  \hbox{ and }\ 
R(f,g):=\sum_j\sum_{|j'-j|\leq1}\dj f\,\Delta_{j'}g\quad \hbox{ with }\
S_j:=\sum_{j'<j}\dj.
$$
Decomposition \eqref{eq:dec} is obtained after noticing that 
\begin{equation}\label{eq:bony}
fg=T_fg+T_gf+R(f,g).
\end{equation}

Let us now go to the proof of 
Inequality \eqref{eq:com}. 
In all that follows, $(c_j)_{j\geq-1}$ stands for a sequence
such that $\|(c_j)\|_{\ell^q}=1.$
\smallbreak
{}From \cite{BCD},   Lemma 2.99, we get, for $i\in\{1,6\},$
$$
\|R_j^i\|_{L^p}\lesssim c_j2^{-j(s-1)}\|\nabla u\|_{L^\infty}\|\omega\|_{B^{s-1}_{p,q}}.
$$
As regards $R_j^2,$ we write that
$$
R_j^2=\sum_{j'\geq j-1} S_{j'-1}\d_k\dj\omega\,\Delta_{j'}\tilde u^k.
$$
Note that $\cF(\dj\omega)$ is supported in an annulus of size $2^j$.
Hence Bernstein's Inequality ensures that 
$$
\|R_j^2\|_{L^p}\lesssim 2^j\sum_{j'\geq j-1} \|\omega\|_{L^\infty}\|\Delta_{j'}\tilde u\|_{L^p},
$$
whence
$$
\|R_j^2\|_{L^p}\lesssim 2^{-j(s-1)} \|\omega\|_{L^\infty}
\sum_{j'\geq j-1}2^{(j-j')s}\:2^{j's}\|\Delta_{j'}\tilde u\|_{L^p},
$$
so that we get if $s>0,$ 
$$
\|R_j^2\|_{L^p}\lesssim c_j2^{-j(s-1)}\|\omega\|_{L^\infty}\|\tilde u\|_{B^{s}_{p,q}}.
$$

As for $R_j^3,$ standard continuity results for the paraproduct operator 
(see e.g. \cite{BCD}, Chap. 2)
imply that $$
\|R_j^3\|_{L^p}\lesssim c_j2^{-j(s-1)}\|\omega\|_{L^\infty}\|\tilde u\|_{B^{s}_{p,q}}.
$$
For $R_j^4,$ one may write that
$$
R_j^4=\d_k\sum_{|j'-j|\leq 2}
\Delta_{j'}\tilde u^k\,\dj(\Delta_{j'\!-\!1}\!+\!\Delta_{j'}\!+\!\Delta_{j'\!+\!1})\omega.
$$
Hence, in view of Bernstein inequality, 
$$
\|R_j^4\|_{L^p}\lesssim2^j\sum_{|j'-j|\leq 2}
\|\omega\|_{L^\infty} \|\Delta_{j'}\tilde u\|_{L^p}.
$$
So  we get 
$$
\|R_j^4\|_{L^p}\lesssim c_j2^{-j(s-1)}\|\omega\|_{L^\infty}\|\tilde u\|_{B^{s}_{p,q}}.
$$
Next, standard continuity results for the remainder operator yield if $s>0,$
$$
\|\d_kR(\tilde u^k,\omega)\|_{B^{s-1}_{p,q}}\lesssim
 \|\omega\|_{L^\infty}\|\tilde u\|_{B^{s}_{p,q}}.
$$
Hence 
$$
\|R_j^5\|_{L^p}\lesssim c_j2^{-j(s-1)}\|\omega\|_{L^\infty}\|\tilde u\|_{B^{s}_{p,q}}.
$$
Finally, let us notice that the operator $\omega\mapsto (\Id-\Delta_{-1})u$ satisfies the hypothesis of the
last item of Proposition \ref{p:properties} with $m=-1,$  hence
$$
\|\tilde u\|_{B^{s}_{p,q}}\lesssim\|\omega\|_{B^{s-1}_{p,q}}.
$$
So putting all the above inequalities together  
completes the proof of \eqref{eq:com}.

%%%%%%%%%%%%%%%%%%%%%%
%%%%%%%%%%%%%%%%%%%%%%
%%%%%%%%%%%%%%%%%%%%%%

\bibliographystyle{amsplain}

\end{document}